\documentclass[english]{amsart}

\usepackage{babel}
\usepackage{csquotes}

\usepackage{relsize}

\usepackage{hyperref}
\hypersetup{
  colorlinks=true,
  linkcolor=blue,
  filecolor=magenta,
  urlcolor=cyan,
}

\newtheorem{theorem}{Theorem}
\newtheorem{remark}{Remark}

\newtheorem{proposition}{Proposition}

\newtheorem{lemma}{Lemma}

\allowdisplaybreaks 
\DeclareMathOperator{\tr}{tr} 
\DeclareMathOperator{\Div}{div} 
\DeclareMathOperator{\vol}{Vol} 

\newcommand{\curvature}{\mathcal R}
\newcommand{\tube}{T}
\newcommand{\tangent}{\mathcal T}

\newcommand{\betafun}{\mathrm B}
\newcommand{\iiff}{\mathcal B}

\title[Minimal tubes of the sphere]{Minimal tubes of the sphere as the dimension grows: Volume and explosion of the mean exit time}
\author{Vicent Gimeno i Garcia}
\address{Department of Mathematics, Universitat Jaume I-IMAC, E-12071, Castell\'{o}, Spain}
\email{gimenov@uji.es}
\author{Martín Torres-Valverde}
\address{Department of Mathematics, Universitat Jaume I-IMAC, E-12071, Castell\'{o}, Spain}
\email{valverdm@uji.es}
\thanks{Research partially supported by the Research Program of Universitat Jaume I Project GACUJIMB/2025/11,  by the project AICO/2023/035 funded by Conselleria de Educaci\'o, Cultura, Universitats i Ocupaci\'o-Generalitat Valenciana,  and by predoctoral program of the Universitat Jaume I PREDOC/2025/80.}

\subjclass[2020]{Primary 58J90, 58J65, 58J35, 53C42}

\keywords{Mean exit time, mean curvature, minimal submanifold, tube, focal distance, codimension, asymptotic analysis}

\begin{document}

\begin{abstract}
The mean exit time of a Brownian motion
from a tube around a closed, minimal embedded
\(n\)-dimen\-sional submanifold of the \((n+m)\)-dimen\-sional
sphere is studied,
establishing two-sided bounds via radial transplant.
In accordance with known
results when \(n = 0\) or \(m = 1,\)
the mean exit time goes to infinity as \(n \to \infty\)
and to \(0\) as \(m \to \infty.\) Simultaneously, bounds for
the volume of such tubes are derived and compared to Weyl's
tube formula.
\end{abstract}

\maketitle
 
\section{Introduction}

Let \(M\) be a complete Riemannian manifold.
For a given region \(\Omega \subseteq M,\) consider a particle following
a standard Brownian motion starting at a point \(p \in \Omega.\)
Its \emph{exit time}
is the infimum time at which the particle escapes from \(\Omega,\) which
is a non-negative random variable (possibly infinite). Its expected value
defines a function
\[ E_\Omega : \Omega \to [0, \infty], \]
which is called the \emph{mean exit time} from \(\Omega.\) The region
\(\Omega\) is assumed to be bounded and not dense in \(M.\)
Then, as is well-known, \(E_\Omega\) is finite and can
be characterised as the unique
solution to the following Poisson problem with Dirichlet boundary
conditions, where we use the sign convention
\(\Delta f = \Div(\nabla f)\) for the Laplace operator:
\begin{equation}
\label{eq:poisson}
\left\{
\begin{aligned}
\Delta E_\Omega = -1 && \text{in the interior of \(\Omega\)}, \\
E_\Omega = 0 && \text{in \(\partial \Omega\).}
\end{aligned}
\right.
\end{equation}

The study of mean exit times is a traditional
problem in stochastic processes. In particular, a considerable
amount of research has been devoted to the exit time from
a region of a Riemannian manifold; see for instance
\cite{markvorsen1989mean,mcdonald13,dryden-langford17,markvorsen2006torsional,cadeddu15:maxim-exit-time,hurtado-markv-palmer12,gim-torsional-rigidity-flow}.
This problem is
also of interest in asymptotic geometric analysis, which concerns
itself with the evolution of geometric and analytic properties as the
dimension tends to infinity. The integral of the mean exit time function
is an object of independent interest, known as the
\emph{torsional rigidity}; see
\cite{polya48} or, more recently, \cite{kraus-simanek24}.

From  now on, we will restrict our attention to the
tubular regions of the sphere that are detailed next.
Let \(\Sigma^n\) be a closed (that is, compact, without boundary)
\(n\)-dimensional embedded submanifold of the \((n+m)\)-dimensional
round sphere \(M = \mathbb S^{n+m}.\) Let \(r : \mathbb S^{n+m} \to \mathbb R\)
be the (intrinsic) distance function \(r(p) = \operatorname{dist}(\Sigma, p).\)
Then the \emph{tube of radius \(R\) around \(\Sigma\)} is the region
\[ \Omega = \tube_R(\Sigma) = \{ p \in \mathbb S^{n+m}\ |\ r(p) \le R \}. \]
In particular, when \(n = 0,\) the tube around
a point is simply the geodesic ball of the same radius. 
We will assume that the tube does not self-intersect and write
\(E_R\) for \(E_\Omega.\)

The study of the mean exit time from a tubular region of the sphere
was undertaken in \cite{bessa2025concentrationmeanexittimes}.
When \(\Sigma = \mathbb S^n \hookrightarrow \mathbb S^{n+1}\)
is a totally geodesic hypersurface (in other words, an equator),
the following bound was obtained:
\begin{equation}
  \label{eq:cota-bessa-vicentes}
  E_R(p) \ge \frac{R-r(p)}{\sin R} \frac{1-\cos^{n+1} r(p)}{(n+1) \cos^{n} r(p)},
\end{equation}
where the right-hand-side approaches infinity as \(n\) tends to infinity,
a result reminiscent of the so-called \emph{fat equator effect}.
On the other hand, when  \(\Sigma = \{q\} \hookrightarrow \mathbb S^m\)
is a point, the following bound was obtained:
\begin{equation}
\label{eq:cota-bessa-vicentes-bola}
E_R(p) \le \frac{1-\cos R}{m\cos R},
\end{equation}
where the right-hand side approaches zero as \(m\) tends to infinity.

It was shown that the two bounds and the asymptotic behaviour above
generalize to minimal, closed embeddings
in ambient manifolds of Ricci curvature larger than that of the sphere.
However, the mean exit time from
tubes around submanifolds of any dimension and codimension remains unstudied.

In this paper, we establish bounds for the mean exit time from a tube
around a closed, minimal embedded submanifold of the sphere of arbitrary dimension
and codimension. The asymptotic behaviour of our lower bound when
the dimension \(n\) tends to infinity is the same
as in \eqref{eq:cota-bessa-vicentes}, with an extra factor of
\(n^{-(m-1)/2}\) that makes it diverge more slowly. The asymptotic behaviour of
our upper bound is the same as in \eqref{eq:cota-bessa-vicentes-bola}
when the codimension \(m\) tends to infinity, but in general the
inequality requires control over the second fundamental form.
\begin{theorem}
\label{thm:main}
    Let \(\Sigma^n\) be a minimal, closed embedded submanifold of
    the sphere \(\mathbb S^{n+m},\) with \(n \ge 1\) and \(m \ge 1.\)
    Fix \(0 < R < \pi/2\) such that the tube \(\tube_R(\Sigma)\)
    does not self-intersect. The mean exit time from \(\tube_R(\Sigma)\)
    admits:
    \begin{itemize}
        \item The lower bound
        \[
        E_R(p) \ge \frac{R - r(p)}{m\sin^m R} \sin^{m-1} \left(
        \sqrt{\frac2{\pi(n+1)}} \frac{m-1}m r(p) 
        \right) \frac{1 - \cos^{n+1} r(p)}{(n+1) \cos^n r(p)},
        \]
        where the right-hand side approaches infinity as \(n\) tends to infinity
        if \(R, r, m\) are fixed.
        \item Let \(c\) be the maximum principal curvature, and let
        \(A = \min_{\Sigma} (1-c\tan R) > 0.\)
        Assume the norm of the second fundamental form \(\iiff\) satisfies
        \begin{equation}
            \label{eq:bound-B-iiff}
            \|\iiff\|^2 \le \frac{Aq\cos^{n+1} R}{\tan^2 R}
        \end{equation}
        in all \(\Sigma.\) Then we have the upper bound
        \[ E_R(p) \le \frac1{m\cos^n RP_{\Sigma}(R)}
        \log\left(\frac{\cos r(p)}{\cos R}\right), \]
        where \(P_{\Sigma}(R) = 1-\tan^{2} R \max_{\Sigma}\|\iiff\|^2 /
        (Aq\cos^{n+1} R).\)
        The right-hand side approaches zero as \(m\) tends to infinity if \(R, r, n\)
        and a bound on \(\|\iiff\|^2\) are fixed.
    \end{itemize}
\end{theorem}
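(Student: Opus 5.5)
The plan is to use radial transplantation: test functions of the form \(F(r)\), with \(r\) the distance to \(\Sigma\), combined with the maximum principle for the Poisson problem \eqref{eq:poisson}. If \(\Delta F(r) \ge -1\) in \(\tube_R(\Sigma)\) and \(F(R)=0\), then \(F(r)\) is a subsolution and \(E_R \ge F(r)\). If \(\Delta F(r) \le -1\), it is a supersolution and \(E_R \le F(r)\).

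The first step is the Laplacian of a radial function inside the tube, below the focal distance:
\[\Delta F(r) = F''(r) + F'(r)\,\Delta r.\]
At the point \(\exp(t\xi)\), with \(\xi\) a unit normal at \(x \in \Sigma\), the standard Jacobi field computation in \(\mathbb S^{n+m}\) gives
\[\Delta r = (m-1)\cot t - \sum_{i=1}^n \frac{\kappa_i(\xi)+\tan t}{1-\kappa_i(\xi)\tan t},\]
where \(\kappa_i(\xi)\) are the principal curvatures of \(\Sigma\) in the direction \(\xi\). The next step is to expand the sum. By minimality, \(\sum_i \kappa_i = 0\), so
\[\sum_i \frac{\kappa_i+\tan t}{1-\kappa_i\tan t} = n\tan t + \sum_i \frac{\kappa_i^2\tan t\,(\ldots)}{1-\kappa_i\tan t}.\]
The correction term is nonnegative, bounded below by \(0\) and above by roughly \(\tan t\,\|\iiff\|^2/A\), using \(1-\kappa_i\tan t \ge A\). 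This yields the two-sided estimate
\[(m-1)\cot t - n\tan t - \frac{\tan t\,\|\iiff\|^2}{A} \;\le\; \Delta r \;\le\; (m-1)\cot t - n\tan t.\]

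For the lower bound, I would take \(F' \le 0\). Then \(\Delta F \ge -1\) follows from \(F'' + F'\bigl((m-1)\cot r - n\tan r\bigr) \ge -1\), using the upper bound on \(\Delta r\), since \(F'\) is negative. Equality in this inequality is the ODE \((\sin^{m-1} r\cos^n r\,F')' = -\sin^{m-1} r\cos^n r\). Its explicit solution \(F\) involves an incomplete beta integral, so I would instead bound it from below in closed form. Writing
\[F(r) = \int_r^R \frac{\int_0^s \sin^{m-1}u\cos^n u\,du}{\sin^{m-1}s\cos^n s}\,ds,\]
I would bound \(\int_0^s \sin^{m-1}u\cos^n u\,du\) from below by restricting the integral to \(u\) near \(s\), where \(\sin u \ge \sin(\text{const}\cdot s)\), and comparing with \(\int \sin u\cos^n u\), which produces the factor \(1-\cos^{n+1}\). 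The denominator \(\sin^{m-1}s\) is bounded by \(\sin^m R\). The \(\sqrt{2/(\pi(n+1))}\) scale in the statement suggests choosing the cutoff from the concentration of \(\cos^n u\) near \(0\), at width about \(n^{-1/2}\). Alternatively, one could directly verify that the stated right-hand side is a subsolution. I expect this calculus step to be the most delicate, since the constants must match exactly, and it is where I would spend effort. Divergence as \(n\to\infty\) is then immediate: \(\cos^{-n} r\) grows exponentially, while the sine factor decays only polynomially, like \(n^{-(m-1)/2}\).

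For the upper bound, I would use the test function \(F = \log(\cos r/\cos R)/K\). It satisfies \(F' = -\tan r/K\) and \(F'' = -\sec^2 r/K\), and now the lower bound on \(\Delta r\) is needed, since \(F' < 0\):
\[K\,\Delta F \le -\sec^2 r + \tan r\Bigl(-(m-1)\cot r + n\tan r + \frac{\tan r\,\|\iiff\|^2}{A}\Bigr).\]
The right-hand side simplifies to \(-m + \tan^2 r\,\|\iiff\|^2/A\), using \(\sec^2 = 1+\tan^2\), and is at most \(-m + \tan^2 R\,\|\iiff\|^2/A\). The remaining task is to choose \(K\) so that the right-hand side is at most \(-K\). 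I would set \(K = m\cos^n R\,P_\Sigma(R)\) and check that hypothesis \eqref{eq:bound-B-iiff} makes \(P_\Sigma > 0\) and gives the needed inequality, possibly after absorbing the \(\cos^n R\) factor into a sharper Jacobian comparison. Finally, \(1/m \to 0\) as \(m\to\infty\) gives the stated asymptotics.
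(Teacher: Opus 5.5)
Your lower bound starts the same way as the paper: \(f_R\circ r\) is a subsolution because \(f_R'\le 0\) and \(\Delta r\le (m-1)\cot r-n\tan r\), so \(E_R\ge f_R\circ r\). The upper bound, however, has a genuine gap, hidden by an algebra slip. Redo the simplification for \(F=\log(\cos r/\cos R)/K\):
\[
K\,\Delta F\le -\sec^2 r-(m-1)+n\tan^2 r+\tan r\cdot(\text{correction})=-m+(n-1)\tan^2 r+\tan r\cdot(\text{correction}).
\]
You dropped the term \((n-1)\tan^2 r\), and this is fatal. In the totally geodesic model (\(\iiff=0\), \(A=1\), \(P_\Sigma=1\)) one has exactly \(\Delta\log\cos r=-m+(n-1)\tan^2 r\). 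So \(\Delta F\le -1\) on \(\tube_R\) forces \(K\le m-(n-1)\tan^2 R\). For example, with \(m=1\), \(n=2\), \(R=\pi/4\) you get \(\Delta F=(\tan^2 r-1)/K\to 0\) as \(r\to R\), for every \(K>0\). More generally, \(\Delta r\le (m-1)\cot r-n\tan r\) for every minimal \(\Sigma\), so \(\Delta\log\cos r\ge -m+(n-1)\tan^2 r\). This is positive near \(\partial\tube_R(\Sigma)\) once \((n-1)\tan^2R>m\). Hence no choice of \(K\), and no sharper Jacobian comparison (the model is already exact), makes \(\log(\cos r/\cos R)/K\) a supersolution.

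The stated bound is nevertheless true, because it is an estimate of the explicit integral \(f_R\), not a pointwise differential inequality. A radial function that dominates \(f_R\) need not be a supersolution, and your argument misses exactly this distinction. The paper keeps the exact model solution as the comparison function. It uses the identity
\[
\Delta(f_R\circ r)=-1-\frac{f_R'(r)\tan r}{\cos^2 r}\sum_i \frac{c_i^2}{1-c_i\tan r}
\]
together with \(-f_R'(r)\le \sin r/(m\cos^n r)\), obtained by replacing \(\cos^n s\) with \(\cos s\) in the numerator. This gives \(\Delta(f_R\circ r)\le -P_\Sigma(R)\), which is where \(\cos^{n+1}R\) and the factor \(q=m\) in \(P_\Sigma\) come from, and hence \(E_R\le f_R\circ r/P_\Sigma(R)\). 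Only afterwards is \(f_R\) bounded, via \(\int_0^u\sin^{m-1}s\cos^n s\,ds\le \sin^m u/m\) and \(\cos^n u\ge\cos u\cos^{n-1}R\), giving \(f_R(r)\le\frac{1}{m\cos^{n-1}R}\log\frac{\cos r}{\cos R}\).

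Two smaller points. First, your correction term is missing a factor \(1/\cos^2 t\); the exact identity is \(\sum_i\frac{\kappa_i+\tan t}{1-\kappa_i\tan t}=n\tan t+\frac{\tan t}{\cos^2 t}\sum_i\frac{\kappa_i^2}{1-\kappa_i\tan t}\). Second, your closed-form lower bound for \(f_R\) is only sketched, and truncating near \(s\) at a fixed fraction of \(s\) would lose an exponential factor. The paper's ingredients are these:
\begin{itemize}
\item the ratio \(\int_0^u\sin^{m-1}s\cos^n s\,ds\big/\int_0^u\cos^n s\,ds\) is increasing in \(u\);
\item this reduces the problem to the known bound \(f_R^{(n,1)}(r)\ge\frac{R-r}{\sin R}\frac{1-\cos^{n+1}r}{(n+1)\cos^n r}\);
\item one then truncates at \(\delta/\sqrt{n+1}\) and uses \(\int_0^r\cos^n s\,ds\ge\frac{2r}{\pi}\sqrt{\pi/(2(n+1))}\), which follows from Wendel's inequality;
\item finally, \(\delta=r\sqrt{2/\pi}\,(m-1)/m\) yields the prefactor \(1/m\).
\end{itemize}
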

\begin{remark}
    The definition of \(A\) is made possible by
    Lemma 8.20 of \cite{gray90:_tubes}, which ensures that
    \(c\tan R < 1\) whenever \(R\) is small enough so that
    the tube does not self-intersect (which, as we will see, amounts to the
    condition that \(R\) is less than the \emph{minimal focal distance} of
    the embedding), and the sectional curvature
    of the ambient manifold satisfies \(\sec \ge 1.\)
\end{remark}
\begin{remark}
    Minimal, closed submanifolds of the sphere
    with bounded second fundamental form have
    been extensively studied. In \cite{an-min92:rigiditytheorem}, it was
    shown that the bound \(\|\iiff\|^2 \le (2/3)n\) implies that the submanifold
    is totally geodesic, with the exception of Veronese surfaces in
    \(\mathbb S^4.\) However, for large codimension, our condition
    \eqref{eq:bound-B-iiff} is not tight, and so the result
    may apply to submanifolds that fall outside of this classification.
\end{remark}
A shortcoming when we permit arbitrary codimension is that the behaviour does
not generalize easily to spaces with controlled Ricci curvature. Our upper
bound is also not immediately generalized to manifolds with bounded sectional
curvature.

Our proof of \autoref{thm:main} uses the method of
\cite{bessa2025concentrationmeanexittimes}. First, the exact solution
for the mean exit time in the case of a totally geodesic embedding
is calculated. The result is given by a certain function
\(f_R : \mathbb R \to \mathbb R\) composed with the distance function \(r\):
\begin{proposition}
    \label{thm:geodesic-embeddings}
    Let \(\mathbb S^n \hookrightarrow \mathbb S^{n+m}\)
    be a totally geodesic embedding of spheres, with \(m \ge 1.\)
    The mean exit time is the radial
    function \(E_R(p) = f_R(r(p))\) given by:
    \[ f_R(r) = \int_r^R \frac{\int_0^u \sin^{m-1} s \cos^n s\, ds}
    {\sin^{m-1} u \cos^n u}\, du,\]
    which satisfies \(f_R''(r) + f_R'(r) [(m-1) \cot r - n \tan r] = -1,\)
    with \(f_R(\pm R) = 0\) and \(f_R'(0) = 0.\)
\end{proposition}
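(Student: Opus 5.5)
The plan is to reduce the Poisson problem \eqref{eq:poisson} on \(\tube_R(\mathbb S^n)\) to an ODE in the distance variable \(r\), solve that ODE explicitly, and conclude by uniqueness of the Dirichlet solution.

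\textbf{Step 1: Laplacian of a radial function.} For a totally geodesic \(\mathbb S^n \subset \mathbb S^{n+m}\), take Fermi coordinates. A point at distance \(r\) from \(\Sigma\) is \(\cos r\, x + \sin r\, \xi\), where \(x \in \mathbb S^n\) and \(\xi\) is a unit normal vector. In these coordinates the metric is
\[
dr^2 + \cos^2 r\, g_{\mathbb S^n} + \sin^2 r\, g_{\mathbb S^{m-1}}.
\]
Hence the volume density is proportional to \(\sin^{m-1} r \cos^n r\). Equivalently, the level sets of \(r\) have mean curvature \((m-1)\cot r - n\tan r\): the normal sphere directions contribute \(\cot r\) each, and the tangential directions, being Jacobi fields with \(\cos r\) behaviour, contribute \(-\tan r\) each. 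Therefore a radial function \(E = f(r)\) satisfies
\[
\Delta E = f''(r) + f'(r)\bigl[(m-1)\cot r - n\tan r\bigr]
= \frac{1}{\sin^{m-1} r\cos^n r}\bigl(\sin^{m-1} r\cos^n r\, f'\bigr)'.
\]

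\textbf{Step 2: Solve the ODE.} Setting this equal to \(-1\) and integrating the divergence form from \(0\) gives
\[
\sin^{m-1} u \cos^n u\, f'(u) = -\int_0^u \sin^{m-1}s\cos^n s\,ds + C.
\]
For \(m \ge 2\), smoothness across \(\Sigma\) forces \(C = 0\). Otherwise \(f'\sim C/\sin^{m-1}u\) blows up, and \(f\) is not smooth at \(\Sigma\); in the case \(m = 2\), for instance, a logarithmic singularity appears. For \(m = 1\), \(r\) should be read as the signed distance on \((-R,R)\). The tube is symmetric under the reflection \(\xi\mapsto -\xi\), so by uniqueness \(E\) is even in \(r\), which gives \(f'(0)=0\) and again \(C=0\). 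Integrating once more from \(r\) to \(R\) with \(f(R)=0\) gives exactly the formula for \(f_R\) in \autoref{thm:geodesic-embeddings}. For \(m=1\), the identity \(f_R(-R)=0\) follows from evenness of the integrand.

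\textbf{Step 3: Verify and conclude.} Check directly that \(f_R\) satisfies the ODE: differentiate \(f_R' = -\int_0^r \sin^{m-1}s\cos^n s\,ds/(\sin^{m-1}r\cos^n r)\). Then check regularity at \(r=0\). Near \(0\) we have \(\int_0^r \sin^{m-1}s\cos^n s\,ds \sim r^m/m\), so \(f_R'(r)\sim -r/m\). Hence \(f_R' \to 0\), and \(f_R\) is a smooth even function of \(r\), which makes \(f_R\circ r\) smooth across \(\Sigma\). Also \(r<R<\pi/2\), so \(\cos r>0\) and \(f_R\) is well defined.

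The function \(f_R\circ r\) therefore solves \eqref{eq:poisson} on \(\tube_R\). By uniqueness of the Dirichlet problem, \(E_R = f_R\circ r\).

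The main obstacle is the regularity at \(\Sigma\): justifying that \(f_R\circ r\) is a genuine smooth (or at least \(C^2\)) solution across the focal set \(r=0\), where the distance function is not smooth for \(m\ge2\). I would handle this by observing that \(f_R\) is an even smooth function of \(r\), hence a smooth function of \(r^2\). The function \(r^2\) is smooth near \(\Sigma\) inside the tube, which is non-self-intersecting for \(R<\pi/2\).
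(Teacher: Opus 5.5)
Your proof is correct. It shares the paper's skeleton: reduce to an ODE in \(r\), integrate the first-order equation for \(f_R'\) with factor \(\sin^{m-1} r\cos^n r\), fix the constant at \(r=0\), then integrate using \(f_R(R)=0\). Two ingredients are handled differently.

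\textbf{Computing the Laplacian.} The paper computes \(\Delta(f\circ r)=f''-f'\tr S\) through the Riccati equation \(\kappa_i'=\kappa_i^2+1\). This gives \(\kappa_i=\tan(r+\arctan c_i)\), with \(c_i=0\) in tangential directions and \(c_i=-\infty\) in normal directions by \autoref{thm:minus-infty}. You instead read the operator off the explicit warped-product metric \(dr^2+\cos^2 r\, g_{\mathbb S^n}+\sin^2 r\, g_{\mathbb S^{m-1}}\). Your route is more elementary, gives the divergence form at once, and avoids the Fermi-coordinate limit argument. However, it only works in the totally geodesic case. The paper's principal-curvature formulation is what gets reused for the radial transplant in Section~4, where \(c_i\neq 0\).

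\textbf{Fixing the constant.} The paper asserts a priori that \(E_R\) is radial ``by symmetry'' and imposes \(f_R'(0)=0\) as a ``symmetry condition''. For \(m\ge 2\), where \(r\ge 0\) is not smooth on \(\Sigma\), this is really a regularity requirement. Your argument fills that gap:
\begin{itemize}
\item you build the candidate and rule out \(C\neq 0\) because it makes \(f\) unbounded at \(\Sigma\);
\item you check that \(f_R\) extends to an even smooth function, so \(f_R\circ r\) is a smooth function of the smooth function \(r^2\);
\item you conclude by uniqueness of the Dirichlet problem.
\end{itemize}
As a result, the radial symmetry of \(E_R\) is proved rather than assumed.

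\textbf{One small slip.} For \(m=1\), \(f_R(-R)=0\) holds because the integrand \(u\mapsto \int_0^u\cos^n s\,ds/\cos^n u\) is \emph{odd}, which makes \(f_R\) even. It is not because the integrand is even; an even integrand would give \(f_R(-R)=2f_R(0)>0\).
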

\begin{remark}
    If we rescale to the sphere \(\mathbb S^{n+m}(\rho)\) of radius \(\rho\)
    (sectional curvature \(\frac1{\rho^2}\)), then the expression
    for the mean exit time must be rescaled somehow. Notice that it
    would be valid (after replacing \(R,\ r\)  by \(R/\rho,\ r/\rho\)
    respectively) for
    a Brownian motion with variance \(\rho^2\) per unit of time. Thus,
    if \(\sigma^2\) is the variance per unit of time,
    the function for the mean exit time becomes:
    \[ f_{R;\rho,\sigma^2}(r) = \frac{\rho^2}{\sigma^2}
    \int_{r/\rho}^{R/\rho} \frac{\int_0^u \sin^{m-1} s \cos^n s\, ds}
    {\sin^{m-1} u \cos^n u}\, du. \]
\end{remark}
Then, the mean exit time \(E_R\) from a tube around
an arbitrary closed submanifold is compared to the function \(f_R \circ r;\)
that is, a radial transplant. Lastly, some inequalities are obtained
for the function \(f_R.\)

As an application, we also obtain bounds for the volume of these tubes.
We use the divergence theorem for this, given that previous steps
provide a comparison between the Laplace operators of \(E_R\)
and \(f_R \circ r.\) In this way, we can state:
\begin{theorem}
\label{thm:volumes}
    Let \(\Sigma^n \hookrightarrow \mathbb S^{n+m}\) be a minimal, closed
    embedding (\(m, n \ge 1\)). Let
    \(\mathbb S^n \hookrightarrow \mathbb S^{n+m}\) be a totally
    geodesic embedding. Let \(0 < r < R.\)
    \begin{itemize}
        \item The tube volumes are related by:
    \[
    \frac{\vol(\tube_R(\Sigma))}{\vol(\tube_r(\Sigma))} \le
    \frac{\vol(\tube_R(\mathbb S^n))}{\vol(\tube_r(\mathbb S^n))}. \]
    As a consequence:
    \begin{equation}
    \label{eq:volume-inequality}
    \begin{aligned}
    \vol(\tube_R(\Sigma))
    &\le \frac{\vol(\tube_R(\mathbb S^n))}{\vol(\mathbb S^n)} \vol(\Sigma) \\
    &= \frac{2\pi^{m/2}}{\Gamma(m/2)} \left(\int_0^R \sin^{m-1} s \cos^n s\, ds \right) \vol(\Sigma).
    \end{aligned}
    \end{equation}
    \item
    Let \(\|\iiff\|^2,\  A\) and \(P_{\Sigma}(R)\) be as in \autoref{thm:main},
    and assume the bound \eqref{eq:bound-B-iiff} on the second
    fundamental form. The tube volumes satisfy:
    \[
    \frac{\vol(\tube_R(\Sigma))}{\vol(\tube_r(\Sigma))}
    \ge
    \left( \frac{\vol(\tube_R(\mathbb S^n))}{\vol(\tube_r(\mathbb S^n))}
    \right)^{P_{\Sigma}(R)}.
    \]
    \end{itemize}
\end{theorem}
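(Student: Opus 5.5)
The plan is to compare the tube $\tube_t(\Sigma)$ with the model tube $\tube_t(\mathbb S^n)$ through the flux of one radial vector field. Write
\[ J(t) = \sin^{m-1} t \cos^n t, \qquad \varphi(t) = \int_0^t J(s)\,ds, \]
so that $J'/J = (m-1)\cot t - n\tan t$. The normalised model volume is then $\vol(\tube_t(\mathbb S^n)) = \vol(\mathbb S^n)\,\frac{2\pi^{m/2}}{\Gamma(m/2)}\,\varphi(t)$. Set $V(t) = \vol(\tube_t(\Sigma))$. By the coarea formula, and because $|\nabla r| = 1$ away from $\Sigma$, we have $V'(t) = \vol(\partial\tube_t(\Sigma))$. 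I would therefore reduce both items to differential inequalities for $\log V$ versus $\log\varphi$ and integrate them from $r$ to $R$. This is the divergence-theorem idea announced before the statement, with the vector field
\[ X = \frac{\varphi(r)}{J(r)}\,\nabla r \]
playing the role of $-\nabla(f_R\circ r)$; indeed $f_R' = -\varphi/J$ by \autoref{thm:geodesic-embeddings}.

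\textbf{Upper bound.} First I need the Laplacian comparison $\Delta r \le (m-1)\cot r - n\tan r$ inside a non-self-intersecting tube around a minimal $\Sigma \subset \mathbb S^{n+m}$. This is Heintze--Karcher type: one writes the shape operator of the level set $\partial\tube_r$ through the Jacobi/Riccati equation. The $m-1$ normal directions contribute $\cot r$ each. The $n$ tangential directions contribute $-\tan(r+\theta_i)$-type terms, where the $\theta_i$ come from the principal curvatures $\kappa_i$ via $\tan\theta_i=\kappa_i$. These terms sum to at most $-n\tan r$ because $\sum\kappa_i = 0$ and $\tan$ is convex on the relevant range, which is where $c\tan R<1$ from the Remark enters. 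From this comparison,
\[ \Div X = 1 + \frac{\varphi}{J}\Bigl(\Delta r - \frac{J'}{J}\Bigr) \le 1 . \]
Apply the divergence theorem on $\tube_t(\Sigma)\setminus\tube_\epsilon(\Sigma)$ and let $\epsilon\to0$. The inner flux vanishes, since $\varphi/J\sim\epsilon/m$ while $\vol(\partial\tube_\epsilon)=O(\epsilon^{m-1})$. This gives $\frac{\varphi(t)}{J(t)}V'(t)\le V(t)$, that is, $(\log V)'\le(\log\varphi)'$. Integrating from $r$ to $R$ gives the first ratio inequality. Letting $r\to0$ and using $V(r)/\varphi(r)\to\frac{2\pi^{m/2}}{\Gamma(m/2)}\vol(\Sigma)$ (Weyl's formula to leading order) gives \eqref{eq:volume-inequality}.

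\textbf{Lower bound.} The same computation shows that it suffices to prove $\Div X\ge P_\Sigma(R)$ on the tube. This is equivalent to the reverse comparison
\[ \Delta r \ \ge\ \frac{J'}{J} - \bigl(1-P_\Sigma(R)\bigr)\frac{J}{\varphi}. \]
Given it, we get $\varphi V'/J\ge P_\Sigma(R)V$, so $(\log V)'\ge P_\Sigma(R)(\log\varphi)'$, and integrating yields the power $P_\Sigma(R)$. To prove the comparison I would expand $-\sum_i\tan(r+\theta_i)$ around $\theta_i=0$. The first-order term vanishes by minimality. The remainder is bounded using $1-\kappa_i\tan r\ge A$, which gives an error of at most a constant times $\tan r\,\|\iiff\|^2/A$. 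I would then compare this error with $J/\varphi$ using an estimate $\varphi(r)\le J(r)\tan r/(q\cos^{n+1}R)$-type. This should be exactly the estimate underlying the upper bound of \autoref{thm:main}, so I would reuse it rather than reprove it.

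\textbf{Main obstacle.} This last step is the hardest: the lower Laplacian comparison with the precise constant $P_\Sigma(R)$, and in particular matching the second-order remainder in $\|\iiff\|^2$ against $J/\varphi$ uniformly for $r\in(0,R]$. Everything else is the routine flux argument above.
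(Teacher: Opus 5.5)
Your proposal is correct and is essentially the paper's proof. Your $X$ is $-\nabla(f_R\circ r)$, and the paper integrates $-\Delta(f_R\circ r)$ over $\tube_t(\Sigma)$ using $\Delta(f_R\circ r)\ge -1$ (resp.\ $\le -P_\Sigma(R)$) from the proof of \autoref{thm:main}. It obtains $(\log V)'\le(\log\varphi)'$ (resp.\ $\ge P_\Sigma(R)(\log\varphi)'$), then integrates and lets $r\to 0$ just as you do; your treatment of the inner flux at $\Sigma$ is more careful than the paper's.

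One precision: the convexity and the expansion must be taken in the variables $\kappa_i$, not in the angles $\theta_i$, since minimality gives $\sum_i\tan\theta_i=0$, not $\sum_i\theta_i=0$. The working tool is the exact identity $\sum_i\tan(r+\arctan\kappa_i)-n\tan r=\sec^2 r\,\tan r\sum_i\kappa_i^2/(1-\kappa_i\tan r)$, which gives $\Delta r\le J'/J$ at once. Combined with $1-\kappa_i\tan r\ge A$ and $\varphi(r)\le\sin^m r/m$, it yields exactly the constant $1-P_\Sigma(R)$, with $q=m$. Your $\cos^{n+1}R$-type bound would instead lose a factor $\cos^2 R$.
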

\begin{remark}
    There currently exist estimations for the volume of a closed,
    minimal embedded
    submanifold \(\Sigma^n \hookrightarrow \mathbb S^{n+m}.\)
    One of them is, simply, \(\vol(\Sigma) \ge \vol(\mathbb S^n).\)
    A more refined one from \cite{chengli81:heat-equations-minimal},
    in the case where \(\Sigma\) is of maximal dimension (which means that
    \(\Sigma\) is not contained in a hyperplane of \(\mathbb R^{n+m+1}\)) is
    \[ \vol(\Sigma) > \left( 1 + \frac{2m+1}{B_n} \right) \vol(\mathbb S^n), \]
    where \(B_n\) is a constant that depends only on \(n.\)
\end{remark}
\begin{remark}
    Consider a
    tube of radius \(R\) around a closed embedded submanifold \(\Sigma^n\)
    in a manifold \(M^{n+m}\) of positive sectional curvature
    (\(\sec_M > 0\)). According
    to theorem 8.4 of \cite{gray90:_tubes}, the volume of the tube
    in \(M\) is bounded by:
    \begin{equation}
        \label{eq:weyl-inequality}
    \vol(\tube_R(\Sigma)) \le \frac{2\pi^{m/2} R^m}{m\Gamma(m/2)}
    \left(\vol(\Sigma) + \sum_{c=1}^{\lfloor{m/2}\rfloor}
    \frac{k_{2c}(\curvature^\Sigma - \curvature^M)R^{2c}}{(m+2)\cdots (m+2c)} \right),
    \end{equation}
    where the \(k_{2c}\) terms are known as \emph{integrated mean curvatures}.
    According to Corollary 9.25 of the same book, \(\vol(\tube_R(\Sigma))\) is
    smaller than the analogue expression with \(\curvature^M = 0,\) i.e., when
    \(\Sigma\) is embedded in \(\mathbb R^{n+m}\)
    (which implies \eqref{eq:weyl-inequality} is an
    equality, called \emph{Weyl's tube formula}).
    There is also
    a formula for the volume of the tube in \(M\) as a power expansion:
    \[ \vol(\tube_R(\Sigma)) = \frac{2\pi^{m/2} R^m}{m\Gamma(m/2)}
    \int_\Sigma \left(1 + O(R^2) \right)\, d\Sigma. \]
    (See Theorem 9.23 of the same book.)
    
    Our inequality \eqref{eq:volume-inequality} is a specialization to the
    spherical case. While of a similar spirit, it does without the summation
    or series expansion.
\end{remark}

\section*{Outline}

\begin{itemize}
    \item In \autoref{sec:preliminaries}, we
    review some fundamental concepts about tubes and establish some
    notation. Our exposition is based on that of \cite{gray90:_tubes}.
    \item In \autoref{sec:proof-met-totally-geodesic}, we prove 
    \autoref{thm:geodesic-embeddings}, deriving the function \(f_R\)
    that gives the mean exit time in the case of totally geodesic embeddings.
    \item In \autoref{sec:proof-main}, we establish \autoref{thm:main} about
    the mean exit time from minimal submanifolds.
    \item In \autoref{sec:proof-volumes}, we establish \autoref{thm:volumes}
    about the volume of minimal submanifolds.
\end{itemize}

\section{Preliminaries}
\label{sec:preliminaries}

Let \(\Sigma^n \hookrightarrow M^{n+m}\)
be an (isometric) embedding of the \(n\)-dimensional Riemannian manifold
\(\Sigma\)  in the \((n+m)\)-dimensional Riemannian manifold
\((M, \langle \cdot, \cdot \rangle)\) (both connected).
Assume furthermore that \(M\) is complete and \(\Sigma\) is closed.
Let \(\tangent \Sigma\) denote the tangent bundle of \(\Sigma.\)
The normal bundle \(\tangent \Sigma^\perp\) consists of all vectors that are tangent
to \(M\) and orthogonal to \(\Sigma.\) Let \(\exp: \tangent M \to M\) be the
exponential map of \(M,\) and let
\begin{align*}
    \exp_{\tangent \Sigma^\perp} : \tangent \Sigma^\perp &\to M \\
    (p, v) &\mapsto \exp_p(v)
\end{align*}
be the exponential map of the normal bundle. Let \(C(\Sigma)\) be the 
minimal focal
distance of \(\Sigma,\) which is the maximum length such that all
geodesics meeting \(\Sigma\) orthogonally are distance-minimizing.
By compactness, \(C(\Sigma) > 0.\)
For \(R > 0\) we consider the region
\[ \tube_R(\Sigma) = \{ \exp_{\tangent \Sigma^\perp}(p, v)\ |\ (p, v)
\in \tangent \Sigma^\perp,\ \|v\| \le R \}, \]
which is called the \emph{tube of radius \(R\) around \(\Sigma\)}.
We will henceforth assume that \(R\) is small enough so that
\[ \exp_{\tangent \Sigma^\perp} : \{(p, v) \in \tangent \Sigma^\perp\ |\ \|v\|
\le R \} \to \tube_R(\Sigma) \]
is a diffeomorphism, which amounts to the condition \(R < C(\Sigma).\) When this
happens, we say that
\(T_R(\Sigma)\) \emph{does not self-intersect}.

Since \(\Sigma\) has no boundary, the tube of radius \(R\) consists simply
of all the points of \(M\) whose distance to \(\Sigma\) does not exceed \(R,\)
which is the characterization we gave in the Introduction.
Likewise, the boundary \(\partial \tube_R(\Sigma)\) of the tube consists of
the points of \(M\) whose distance to \(\Sigma\) is exactly \(R.\) Thus,
a particle starting close to \(\Sigma\) will exit the tube precisely when
its distance to \(\Sigma\) reaches \(R.\)

Let \(N = \nabla r.\)
Each level set of the distance function \(r\) is a
hypersurface
\(\Sigma_r = \partial \tube_r(\Sigma).\) We define its \emph{shape operator}
or \emph{Weingarten map}
\begin{align*}
S: \tangent\Sigma_r &\to \tangent\Sigma_r, \\
S X &= - \nabla_X N.
\end{align*}
Note that, for \(X, Y\) tangent to \(\Sigma_r\):
\begin{align*}
\langle S X, Y \rangle
&= -\langle \nabla_X N, Y\rangle \\
&= \langle N, \nabla_X Y \rangle \\
&= \langle N, \nabla_Y X \rangle \\
&= -\langle \nabla_Y N, X \rangle \\
&= \langle X, S Y \rangle,
\end{align*}
where the third equality follows from \(\nabla_X Y - \nabla_Y X = [X, Y]\)
being tangent to \(\Sigma_r.\)
Thus, the shape operator is a self-adjoint \((1,\!1)\) tensor.
The \emph{second fundamental form} is its symmetric \((2,\!0)\)
version \(\iiff,\) defined by
\(\iiff(X, Y) = \langle S(X), Y \rangle. \) Note that \(S X\) is
tangent to \(\Sigma_r\) for any field \(X \in \mathfrak X(M),\) because
\(\|N\| = 1.\) The trace of the
shape operator is the \emph{mean curvature} \(H = \tr S = - \Delta r.\)
\begin{lemma}[Lemma 3.2 of \cite{gray90:_tubes}]
\label{thm:ricatti-shape-op}
The shape operator at \(\Sigma_r\) satisfies:
   \[ \nabla_{N} S = S^2 + \curvature(N, \cdot)N, \]
where \(\curvature\) is the curvature tensor, and \(S^2 X = S(SX).\)
\end{lemma}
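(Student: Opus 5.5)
The plan is a direct tensorial computation. Work on the open set \(\tube_R(\Sigma) \setminus \Sigma\), where \(r\) is smooth because \(R < C(\Sigma)\). Extend \(S\) to all vectors by \(SX = -\nabla_X N\), which is consistent with the definition above. Then evaluate \((\nabla_N S)X = \nabla_N(SX) - S(\nabla_N X)\) on an arbitrary vector field \(X\).

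The first key fact is \(\nabla_N N = 0\). Since \(\|\nabla r\| = 1\), the integral curves of \(N\) are the unit-speed normal geodesics leaving \(\Sigma\). Equivalently, \(\langle \nabla_Y N, N\rangle = \tfrac12 Y\|N\|^2 = 0\) for all \(Y\), and the symmetry of the Hessian of \(r\) gives \(\langle \nabla_N N, Y\rangle = \langle \nabla_Y N, N\rangle = 0\). Two consequences follow:
\begin{itemize}
\item \(SN = 0\), and \(SX\) is always tangent to \(\Sigma_r\);
\item both sides of the identity vanish on \(X = N\), because \((\nabla_N S)N = \nabla_N(SN) - S(\nabla_N N) = 0\) and \(\curvature(N,N)N = 0\).
\end{itemize}
So it suffices to take \(X\) tangent to the level sets \(\Sigma_r\).

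For such \(X\), write
\[
(\nabla_N S)X = -\nabla_N \nabla_X N + \nabla_{\nabla_N X} N .
\]
Now expand the curvature term \(\nabla_N\nabla_X N - \nabla_X\nabla_N N - \nabla_{[N,X]}N\), using three substitutions:
\begin{itemize}
\item \(\nabla_X\nabla_N N = 0\), by \(\nabla_N N = 0\);
\item \([N,X] = \nabla_N X - \nabla_X N\), since the connection is torsion-free;
\item \(\nabla_{\nabla_X N} N = -S(\nabla_X N) = S(SX) = S^2X\).
\end{itemize}
Substituting, the \(\nabla_{\nabla_N X}N\) terms cancel, and we are left with \((\nabla_N S)X = S^2 X \pm \curvature(N,X)N\).

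The only delicate point is the sign of the curvature term. It depends on the curvature convention. With Gray's convention \(\curvature(X,Y) = \nabla_{[X,Y]} - [\nabla_X,\nabla_Y]\), as used in \cite{gray90:_tubes}, the sign is \(+\), which gives exactly \(\nabla_N S = S^2 + \curvature(N,\cdot)N\). I would state this convention explicitly. As a sanity check, on the sphere this convention makes \(\curvature(N,X)N = X\) for unit \(X \perp N\). Then the scalar Riccati equation \(\lambda' = \lambda^2 + 1\) has solution \(\lambda = \tan(r + \text{const})\). This matches the principal curvatures of the level sets: for the totally geodesic \(\mathbb S^n\), the tangential directions give \(\tan r\) and the normal directions give \(-\cot r\), consistent with \(H = -\Delta r = n\tan r - (m-1)\cot r\) in \autoref{thm:geodesic-embeddings}.
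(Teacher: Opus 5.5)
Your computation is correct: expanding with the standard convention gives \((\nabla_N S)X = S^2X - R^{\mathrm{std}}(N,X)N\), which is exactly \(S^2X + \curvature(N,X)N\) under Gray's convention \(\curvature(X,Y) = \nabla_{[X,Y]} - [\nabla_X,\nabla_Y]\), so your \(\pm\) resolves to \(+\) as you claim and your sphere check agrees with the paper's \(\kappa_i' = \kappa_i^2 + 1\). The paper gives no proof of its own and simply cites Lemma 3.2 of Gray's book, which rests on essentially this same direct computation from \(\nabla_N N = 0\), torsion-freeness, and the definition of the curvature tensor.
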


The shape operator is defined for hypersurfaces and requires a choice
of unit normal vector (in this case, the outward normal \(N\))
to specify its sign.
If the submanifold \(\Sigma\) is of greater codimension then
there are more ways to choose a normal vector
\((p, v) \in \tangent\Sigma^\perp\) with \(\|v\| = 1.\) Each of these
gives rise to a \((1,\!1)\) tensor \(S_v\) on \(\tangent_p \Sigma\),
satisfying:
\[ \langle S_v(X), Y \rangle = \langle \nabla_X Y, v \rangle \]
This is the \emph{Weingarten map} of \(\Sigma,\) and its \((2,\!0)\)
form is the \emph{second fundamental form} \(\mathcal B_v.\)

Let \(\gamma\) be a unit-speed geodesic emanating orthogonally
from \(\Sigma.\)
The Generalized Gauss Theorem of \cite[p.~26]{gray90:_tubes}
asserts that \(\gamma'(r) = N = \nabla r.\)
(We use \(r\) for the curve parameter because
the distance from \(\gamma(r)\) to \(\Sigma\) is exactly \(r\) whenever
\(r \le R.\))

The \emph{principal curvatures} are the eigenvalues
\(c_1, \dots, c_n\) of the Weingarten map \(S_{\gamma'(0)},\)
which may be extended to eigenvalues
\(\kappa_1(r), \dots, \widehat{\kappa_{n+1}}(r), \dots,
\kappa_{n+m}(r)\) of the shape operator along \(\gamma,\)
with associated orthonormal eigenvectors
\( F_1(r), \dots,\allowbreak F_{n+m}(r), \) with the convention
\(F_{n+1}(r)\allowbreak = N = \gamma'(r).\)
These satisfy a Ricatti differential equation derived from
\eqref{thm:ricatti-shape-op}:
\begin{lemma}[Corollary 3.5 of \cite{gray90:_tubes}]
\label{thm:ricatti-principal-curv}
    If \(F_i\) is differentiable at \(\gamma(r),\) then:
    \[ \kappa_i'(r) = \kappa_i(r)^2 +
    \curvature(F_i, N, F_i, N). \]
\end{lemma}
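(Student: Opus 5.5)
The plan is to derive the scalar equation directly from the operator Riccati equation of \autoref{thm:ricatti-shape-op}. We work along the unit-speed normal geodesic \(\gamma\), on an interval where \(F_i\) is differentiable. There \(S F_i = \kappa_i F_i\) with \(\|F_i\| = 1\), and \(F_i\) is tangent to \(\Sigma_r\).

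First, apply \(\nabla_N = \nabla_{\gamma'}\) to the eigenvalue identity \(S F_i = \kappa_i F_i\). By the Leibniz rule for the \((1,\!1)\) tensor \(S\),
\[ (\nabla_N S) F_i + S(\nabla_N F_i) = \kappa_i' F_i + \kappa_i \nabla_N F_i . \]
Next, take the inner product of both sides with \(F_i\).
\begin{itemize}
\item Since \(\|F_i\| = 1\), we have \(\langle \nabla_N F_i, F_i\rangle = \tfrac12 N\langle F_i,F_i\rangle = 0\). This kills the last term on the right.
\item By the self-adjointness of \(S\) proved in the preliminaries, \(\langle S\nabla_N F_i, F_i\rangle = \langle \nabla_N F_i, S F_i\rangle = \kappa_i \langle \nabla_N F_i, F_i\rangle = 0\). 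This kills the second term on the left.
\end{itemize}
If \(\nabla_N F_i\) has a component along \(N\), self-adjointness should be applied after projecting onto \(\tangent\Sigma_r\). This is harmless, because \(S N = 0\): differentiating \(\|N\| = 1\) gives \(\langle \nabla_X N, N\rangle = 0\), and \(\nabla_N N = 0\) since integral curves of \(N\) are geodesics. What remains is
\[ \kappa_i' = \langle (\nabla_N S) F_i, F_i \rangle . \]

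Now substitute \autoref{thm:ricatti-shape-op}. This gives \(\kappa_i' = \langle S^2 F_i, F_i\rangle + \langle \curvature(N,F_i)N, F_i\rangle\). Here \(\langle S^2F_i,F_i\rangle = \kappa_i^2\), and the curvature term is \(\curvature(F_i,N,F_i,N)\) after using the symmetries of the curvature tensor, which proves the lemma.

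The only delicate point is matching sign conventions for \(\curvature\) and \(S\); the computation itself is routine. We will check the signs on the model case of a point in the unit sphere. There \(S = -\nabla N\) with \(N\) outward gives \(\kappa_i = -\cot r\), so \(\kappa_i' = \csc^2 r = \kappa_i^2 + 1\). Hence the curvature term must equal the sectional curvature \(+1\), and this fixes the index convention for \(\curvature(F_i,N,F_i,N)\) to agree with \autoref{thm:ricatti-shape-op}. The differentiability hypothesis on \(F_i\) is exactly what is needed to differentiate \(S F_i = \kappa_i F_i\). At points where eigenvalues cross, \(F_i\) need not be smooth, and the statement makes no claim there.
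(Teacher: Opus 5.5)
Your proof is correct and takes essentially the same route as the source: the paper gives no proof of its own, citing Gray's Corollary 3.5 and remarking only that the scalar equation is ``derived from'' \autoref{thm:ricatti-shape-op}. That derivation is exactly what you carry out: differentiate \(SF_i=\kappa_iF_i\) along \(\gamma\), cancel the cross terms using \(\|F_i\|=1\), self-adjointness of \(S\) and \(SN=0\), then substitute the operator Riccati equation. Your check on geodesic spheres (\(\kappa=-\cot r\), so \(\kappa'=\kappa^2+1\)) correctly fixes the curvature sign to match the paper's later use of the lemma.
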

The last term in the lemma above
is simply the sectional curvature of the plane generated
by \(N\) and \(F_i.\)
Since we are going to take the ambient manifold to be a sphere,
we may choose the
eigenvectors to be parallel (see Lemma 6.13 and Theorem 6.14 of
\cite{gray90:_tubes}),
and in particular differentiable.
We have chosen the principal curvatures of \(\Sigma_r\)
so that \(\lim_{r \to 0} \kappa_i(r) = c_i\) are the principal
curvatures of \(\Sigma\) in the direction of \(\gamma'(0)\)
for \(i = 1, \dots, n.\) The other principal curvatures satisfy:
\begin{lemma}
  \label{thm:minus-infty}
  \(\lim_{r\to0} \kappa_i(r) = -\infty\) for \(i = n+2, \dots, n+m.\)
\end{lemma}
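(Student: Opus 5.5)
We need to prove that \(\lim_{r\to0}\kappa_i(r)=-\infty\) for the \(m-1\) normal directions \(i=n+2,\dots,n+m\). The plan is to solve the Ricatti equation of \autoref{thm:ricatti-principal-curv} explicitly, since the ambient manifold is the round sphere. Because the eigenvectors \(F_i\) can be chosen parallel along \(\gamma\), the sectional curvature term is \(\curvature(F_i,N,F_i,N)=1\). Each \(\kappa_i\) therefore satisfies the scalar ODE
\[ \kappa_i'(r)=\kappa_i(r)^2+1 \qquad \text{for } 0<r<R. \]
Its solutions are \(\kappa_i(r)=\tan(r-a_i)\) for some constant \(a_i\), apart from the degenerate possibilities that the solution blows up. The whole task is to identify \(a_i\) for the normal directions.

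The key step is a geometric description of what happens as \(r\to0\) in the directions \(F_i\) with \(i\ge n+2\). At \(\gamma(0)=p\in\Sigma\), these directions are tangent to the normal space \(\tangent_p\Sigma^\perp\) and orthogonal to \(\gamma'(0)\). Restricted to the \(m\)-dimensional normal slice \(\exp_p(\tangent_p\Sigma^\perp)\), the hypersurface \(\Sigma_r\) looks to first order like a small geodesic sphere of radius \(r\) centred at \(p\). Its shape operator in those directions should behave like that of a distance sphere, namely \(-\cot r\) with the outward-normal sign convention \(SX=-\nabla_XN\).

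To make this rigorous, I would use Jacobi fields along \(\gamma\). For \(X=F_i(0)\) normal to \(\Sigma\), consider the Jacobi field \(J\) arising from rotating the initial normal vector within \(\tangent_p\Sigma^\perp\). It satisfies \(J(0)=0\) and \(J'(0)=F_i(0)\), which gives \(J(r)=\sin r\,F_i(r)\) in the sphere. Since \(\nabla_J N = J'\), we obtain
\[ S J=-J'=-\cos r\,F_i(r), \]
and hence \(\kappa_i(r)=-\cot r\). This is consistent with the ODE, since \(\tan(r-\pi/2)=-\cot r\) corresponds to \(a_i=\pi/2\). In particular \(\kappa_i(r)\to-\infty\) as \(r\to0^+\).

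The main obstacle is justifying that the eigenvector \(F_i\) of \(S\) at \(\gamma(r)\) really coincides with this Jacobi direction, so that the Jacobi computation gives the eigenvalue itself. In other words, we need the normal-type Jacobi fields to span an \(S\)-invariant subspace. This follows from the constant-curvature setting: the Jacobi fields are \(\cos r\,E+\sin r\,E'\) with parallel \(E\), so the tangent vectors of \(\Sigma_r\) split orthogonally and invariantly into the \(\Sigma\)-tangent part and the normal part. This is essentially the content of Gray's Lemma 6.13 and Theorem 6.14, which the paper already invokes.

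Alternatively, if one wants to avoid explicit Jacobi fields, a contradiction argument works. Suppose \(\kappa_i\) stayed bounded near \(0\). Then it would extend to a solution \(\tan(r-a_i)\) that is finite at \(r=0\). This would give a finite eigenvalue of a limiting Weingarten-type operator in a direction normal to \(\Sigma\). That contradicts the degeneration of \(d\exp_{\tangent\Sigma^\perp}\) in those directions at the zero section, since the tube's level sets collapse to \(\Sigma\).
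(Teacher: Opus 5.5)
Your proof is correct, but it takes a different route from the paper. The paper works in Fermi coordinates around \(\gamma(0)\) and uses Gray's bracket formula \([\partial_j,N]=\frac1r\partial_j-\frac{\partial_j r}{r}N\) for the normal coordinate fields to show \(\lim_{r\to0}r\kappa(r)=-1\). That computation does not use the curvature of the ambient space, so it gives the \(-1/r\) blow-up in any Riemannian manifold. Only later, in the proof of \autoref{thm:geodesic-embeddings}, is it combined with the Riccati solution \(\tan(r+\arctan c_i)\) to produce \(-\cot r\). You instead use the variation \(\gamma_t(s)=\exp_p\bigl(s(\cos t\,v+\sin t\,w)\bigr)\) through normal geodesics. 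In the round sphere its variation field is \(J=\sin r\,W\) with \(W\) parallel, and from this you read off the exact eigenvalue \(\kappa_i(r)=-\cot r\) in one step. This is more elementary and directly gives the closed form the paper needs later, but it relies on constant curvature.

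Two remarks on your write-up. First, the \emph{main obstacle} you flag is not really one. The identity \(S(\sin r\,W)=-\cos r\,W\) already shows that \(W(r)\) is an eigenvector with eigenvalue \(-\cot r\), for every unit \(w\in\tangent_p\Sigma^\perp\) orthogonal to \(v\), so no separate invariance argument is needed. What does deserve a line of justification is \(\nabla_J N=J'\). Every \(\gamma_t\) is a unit-speed geodesic leaving \(\Sigma\) orthogonally, so \(N\circ\gamma_t=\gamma_t'\) by the generalized Gauss lemma, and \(J(r)\) is tangent to \(\Sigma_r\). The symmetry lemma \(\frac{D}{\partial t}\partial_s\gamma=\frac{D}{\partial s}\partial_t\gamma\) then gives the claim. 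Second, your alternative contradiction argument is too vague to stand on its own: the ``limiting Weingarten-type operator'' and the contradiction with the degeneration of \(d\exp_{\tangent\Sigma^\perp}\) are never made precise. The Jacobi-field argument does not need it, so you can simply drop it.
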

\begin{proof}
  Our proof of this already-known fact
  rests on Fermi coordinates, a generalization of normal
  coordinates which is explained in detail in
  \cite[ch.~2]{gray90:_tubes}. A system of Fermi coordinates
  \(x^1, \dots, x^{n+m}\) for \(T_R(\Sigma)\)
  around \(p \in \Sigma\) can be defined when we have
  a coordinate system \(y^1, \dots, y^n\) in an open set \(U \subseteq \Sigma\)
  around \(p,\) and orthonormal sections \(E_{1}, \dots, E_{m}\) of
  \(\tangent\Sigma^\perp.\) An arbitrary point of \(\tangent U^\perp\)
  can be written as \(\sum_{j = 1}^m t^j E_j(q)\) for a certain
  \(q \in U.\) The associated Fermi coordinate system on
  \(T_R(U) = \{\exp_{\tangent\Sigma^\perp}(q, v)\ |\ (q, v) \in \tangent U^\perp\ |\ \|v\| \le R\}\)
  is given by:
  \begin{align*}
      x^i \left( \exp_{\tangent \Sigma^\perp}
      \sum_{j = 1}^m t^{j}E_{j}(q) \right) &= y^i(q), && i = 1, \dots, n; \\
      x^{n+i} \left( \exp_{\tangent \Sigma^\perp}
      \sum_{j = 1}^m t^{j}E_{j}(q)\right) &= t^i, && i = 1, \dots, m.
  \end{align*}
  Note that the coordinate fields
  \(\partial_1, \dots, \partial_n\) are tangent to \(\Sigma\) at \(p,\)
  whereas \(\partial_{n+1}, \dots,\allowbreak \partial_{n+m} \) are orthogonal to
  \(\Sigma\) at \(p.\)
  
  Let's take a Fermi coordinate system \(x^1, \dots, x^{n+m}\)
  around \(p = \gamma(0) \in \Sigma\) that also satisfies 
  \(\partial_{n+1}|_{\gamma(r)} = \gamma'(r) = N_{\gamma(r)} \)
  and \(\partial_j|_{\gamma(0)} = F_j\) for \(j=1, \dots, n.\)
  Take one of the eigenvectors
  \(F \in \{F_{n+2}, \dots, F_{n+m} \}\)
  with associated eigenvalue \(\kappa.\)
  We will show that \(\lim_{r\to0} r\kappa(r) = -1,\) from
  which the lemma follows. Writing \(F = F^i \partial_i\)
  (using the Einstein summation convention), we have:
  \begin{align*}
      r\kappa(r)
      &= r\langle S F, F\rangle_{\gamma(r)} \\
      &= r\langle -\nabla_{F} N, F \rangle_{\gamma(r)} \\
      &= -r\langle F^j \nabla_{\partial_j} N, F \rangle_{\gamma(r)} \\
      &= -r\langle F^j (\nabla_{N} \partial j + [\partial_j, N]), F \rangle_{\gamma(r)} \\
      &= -rF^j \langle \nabla_{\partial_{n+1}} \partial_j, F \rangle_{\gamma(r)}
          -r\langle F^j [\partial_j, N], F \rangle_{\gamma(r)}.
      \end{align*}
    The first term tends to zero as \(r \to 0.\)
    The Lie Brackets in the second term are calculated
    in Lemma 2.8 of \cite{gray90:_tubes}.
    They are \([\partial_j, N] = 0\)
    for \(j = 1, \dots, n+1,\) whereas their value for
    \(j = n+2, \dots, n+m\) is:
    \[ [\partial_j, N] = \frac{1}{r}\partial_j - \frac{\partial_j r}{r} N. \]
    Thus:
    \begin{align*}
    \lim_{r \to 0} r\kappa(r)
    &= \lim_{r\to0} -r\langle F^j [\partial_j, N], F \rangle_{\gamma(r)} \\
    &= \lim_{r\to0} r\left\langle - \sum_{j=n+2}^{n+m} \frac{F^j\partial_j}{r}, F \right\rangle_{\gamma(r)} \\
    &= \lim_{r \to 0} -\left\langle \sum_{j=n+2}^{n+m} F^j \partial_j, F \right\rangle_{\gamma(r)} \\
    &= \lim_{r \to 0} -\left\langle \sum_{j=1}^{n+m} F^j \partial_j, F \right\rangle_{\gamma(r)} \\
    &= \lim_{r\to0} -\langle F, F \rangle_{\gamma(r)} \\
    &= -1.
    \end{align*}
    The fourth equality requires some explanation. First,
    \(\langle \partial_{n+1}, F \rangle_{\gamma(r)} =
    \langle N, F \rangle_{\gamma(r)} = 0.\)
    Second, for \(j = 1, \dots, n,\) we have that
    \(\partial_j|_{\gamma(0)} = F_j|_{\gamma(0)}.\) Hence,
    \(\lim_{r\to0}\langle \partial_j, F \rangle_{\gamma(r)} \allowbreak=
    \lim_{r\to0} \langle F_j, F \rangle_{\gamma(r)} = 0.\)
    Lastly, each of the \(F^j\) are bounded because \(\|F\| = 1.\)
\end{proof}

\section{Proof of \autoref{thm:geodesic-embeddings}}
\label{sec:proof-met-totally-geodesic}
Let \(\varphi : \mathbb S^n \hookrightarrow \mathbb S^{n+m}\) be the
totally geodesic embedding.
Fix \(0 < R < \pi/2;\) then the exponential map from
\(\{(p, v) \in (\tangent\mathbb S^n)^\perp\ |\ \|v\| \le R \}\) to the tube
\(\tube_R(\varphi(\mathbb S^n))\) is a diffeomorphism.

Because of the symmetry of the sphere, the solution of
\eqref{eq:poisson} must be radial. That is, \[E_R(p) = f_R(r(p))\]
for a certain function \(f_R : \mathbb R \to \mathbb R.\) For this reason,
we can express the Laplace operator of \(E_R\) as:
\begin{align*}
    \Delta E_R = \Div(\nabla (f_R \circ r)) &= \Div(f_R'(r) N) \\
                                            &= f_R''(r) \|N\|^2 + f_R'(r) \Delta r \\
                                            &= f_R''(r) - f_R'(r) \tr(S) \\
                                            &= f_R''(r) - f_R'(r) (\kappa_1 + \dots + \kappa_{n+m}).
\end{align*}

Choose the orthonormal basis of eigenvectors \(F_i\)
and the principal curvatures \(c_i\) and \(\kappa_i\) along
a geodesic as above.
Now \autoref{thm:ricatti-principal-curv} reads like this:
\begin{align*}
\kappa_{i}'(r) &= \kappa_i(r)^2 + \curvature_{\gamma'(r)F_i(r)\gamma'(r)F_i(r)} \\
               &= \kappa_i(r)^2 + 1.
\end{align*}
The solution to this equation is
analytic:
\[ \kappa_i(r) = \tan(r+\arctan(c_i)),\]
where \(c_i = \lim_{r \to 0} \kappa_i(r).\) are the corresponding
principal curvatures of \(\Sigma.\) In particular, \(c_1, \dots, c_n\)
are the eigenvalues of the Weingarten map, which vanish
because the embedding is totally geodesic. From \autoref{thm:minus-infty},
we have
\(c_{n+2} = \dots = c_{n+m} = -\infty.\)
This allows us to rewrite \eqref{eq:poisson} as:
\[ f_R''(r) - f_R'(r) (-n \tan r + (m-1) \cot r) = -1. \]

  If we take \(f_R'\) as the unknown, this is just a
  first-order linear ODE , which may be solved by
  the standard method of finding an integrating factor.
  The symmetry condition
  \(f_R'(0) = 0\)
  can be used to determine the constant of integration. Thus we have:
  \[ f_R'(r) = - \frac{\int_0^r \sin^{m-1} s \cos^n s\, ds}{\sin^{m-1} r \cos^n r}. \]
  We integrate one more time, and use the boundary condition
  \(f_R(R) = 0.\) This leads to the solution:
  \[ f_R(r) = \int_r^R \frac{\int_0^u \sin^{m-1} s\cos^n s\, ds}{\sin^{m-1} u \cos^n u}\, du, \]
  as desired.

\section{Proof of \autoref{thm:main} (mean exit time)}
\label{sec:proof-main}
Suppose we have a closed manifold \(\Sigma^n\) embedded in \(\mathbb S^{n+m}\)
(\(m, n \ge 1\)).
Given \((p, u) \in \tangent\Sigma^\perp\) with \(\|u\| = 1,\)
the eigenvalues of the second
fundamental form \(\iiff_u\) are the principal curvatures \(c_1, \dots, c_n;\)
in particular, the minimality condition means that the mean curvature
\(H_u = c_1 + \dots + c_n = 0.\) The norm of the second
fundamental form is defined as \(\|\iiff_u\|^2 = c_1^2 + \dots + c_n^2.\)

Fix a number \(0 < R < C(\Sigma),\) where \(C(\Sigma)\)  is the minimal
focal distance. We will study the solution
\(E_R : \tube_R(\Sigma) \to \mathbb R\)
to the Poisson problem \eqref{eq:poisson}, where \(\tube_R(\Sigma)\) is the
tube of radius \(R\) around \(\Sigma.\)
We will split the proof into two parts:
\begin{enumerate}
    \item First, we will radially transplant the function
    \(f_R = f_R^{(n, m)}\) defined in
    \autoref{thm:geodesic-embeddings} to our embedding. By comparing
    the Laplacian of the exact solution \(E_R\) to the Laplacian
    of the radial function \(f_R \circ r,\) we will show that:
    \[ E_R(p) \ge f_R(r(p)). \]
    In the other direction, let
    \(A = 1-\tan R\max_{i,\Sigma} c_i > 0,\)
    and let
    \[P(R) = 1 - \frac{\tan^2 R \max_{\Sigma} \|\iiff\|^2}{Aq\cos^{n+1} R}. \]
    Then, assuming the second fundamental form satisfies \eqref{eq:bound-B-iiff},
    we will show:
    \[ E_R(p) \le \frac1{P(R)} f_R(r(p)) =
    \frac{Aq\cos^{n+1} R}{Aq\cos^{n+1} R -
    \tan^2 R \max_{\Sigma}\|\iiff\|^2}f_R(r(p)). \]
    \item Next, we will get the lower bound
    \[ f_R^{(n, m)}(r) \ge \frac{R-r}{m\sin^m R} \sin^{m-1} \left(
    \sqrt{\frac2{(n+1)\pi}} \frac{m-1}m r \right)
    \frac{1-\cos^{n+1} r}{(n+1)\cos^n r}, \]
    which implies \(f_R^{(n,m)}(r) \to \infty\) as \(n \to \infty,\)
    and the upper bound
    \[ f_R^{(n,m)}(r) \le \frac{1}{m\cos^{n-1} R}
    \log\left(\frac{\cos r}{\cos R}\right), \]
    which implies \(f_R^{(n,m)}(r) \to 0\) as \(m \to \infty.\)
\end{enumerate}
The theorem follows immediately once we chain the corresponding inequalities
from the first and the second part.

\subsection*{First part: radial transplant}
Consider the radial function \(\tilde E_R(p) = f_R(r(p))\) on \(\tube_R(\Sigma).\)
Its Laplacian is given by:
\[ \Delta \tilde E_R(p) = f_R'(r(p)) (n\tan r(p) - (m-1)\cot r(p) - \tr(S(p))) - 1. \]
Here, as happened with the geodesic spheres, the shape operator has
eigenvalues \(\kappa_i(r) = \tan(r + \arctan(c_i)),\) where
\(c_i = -\infty\) for \(i = n+2, \dots, n+m.\) In this way:
\[ \Delta \tilde E_R = f_R'(r) \left[ n \tan r -
\sum_{i = 1}^n \tan(r + \arctan(c_i)) \right] - 1. \]
Note that \(1 - c_i\tan r \ge A > 0\) for all \(i = 1, \dots, n,\) whereas
\( f_R'(r) \le 0. \)
Using the formula for the tangent of the sum of two angles and the first few
terms of the geometric series of ratio \(c_i \tan r\):
\begin{align*}
    &\Delta \tilde E_R \\
    =& f_R'(r) \left[ n \tan r - \sum_{i=1}^n \frac{\tan r + c_i}{1 - c_i \tan r} \right] - 1 \\
    =& f_R'(r) \left[ n \tan r - \sum_{i = 1}^n \left( \frac{\tan r}{1 - c_i \tan r} + \frac{c_i}{1-c_i \tan r} \right) \right] - 1 \\
    =& f_R'(r) \left[
    n\tan r - \mathlarger{\mathlarger{{\sum_{i = 1}^n}}} \left( 
    \begin{gathered}
    \tan r \left( 1 + c_i\tan r +  \frac{c_i^2\tan^2 r}{1 - c_i\tan r} \right) \\
    + c_i \left( 1 + \frac{c_i \tan r}{1 - c_i \tan r} \right)
    \end{gathered} \right)
    \right] - 1 \\
    =& f_R'(r) \left[-\mathlarger{\mathlarger{\sum_{i=1}^n}}
    \left( 
    \begin{gathered}
     c_i\tan^2 r + c_i
    + \tan^2 r \frac{c_i^2 \tan r}{1-c_i\tan r} + \frac{c_i^2 \tan r}{1-c_i\tan r}
    \end{gathered}  \right) \right] - 1\\
    =& -f_R'(r) (1+\tan^2 r) \sum_{i=1}^n \left( c_i + \frac{c_i^2 \tan r}{1-c_i\tan r} \right) -1 \\
    =& -\frac{f_R'(r)}{\cos^2 r}\left( H + \sum_{i=1}^n \frac{c_i^2 \tan r}{1-c_i\tan r} \right) - 1 \\
    =& -\frac{f_R'(r)}{\cos^2 r}\sum_{i=1}^n \frac{c_i^2 \tan r}{1-c_i\tan r} -1 \\
    \ge& -1.
\end{align*}

On the other hand, the exact solution of \eqref{eq:poisson} satisfies
\(\Delta E_R = -1,\) and both \(E_R\) and \(\tilde E_R\) vanish at the
boundary of the tube (where \(r = R\)). Hence:
\begin{equation}
\label{eq:subharmonic-1}
  \Delta (\tilde E_R - E_R) \ge 0,
\end{equation}
that is, \(\tilde E_R - E_R\) is a sub-harmonic function that vanishes
at the boundary of the tube. By the maximum principle,
\(\tilde E_R - E_R \le 0,\) or \(E_R \ge f_R \circ r,\) as desired.

For the second inequality, note that
\begin{align*}
    -f_R'(r) &= \frac{\int_0^r \sin^{m-1} s \cos^n s\, ds}{\sin^{m-1} r \cos^n r} \\
    &\le \frac{\int_0^r \sin^{m-1} s \cos s\, ds}{\sin^{m-1} r \cos^n r} \\
    &= \frac{\sin^m r}{m\sin^{m-1} r \cos^n r} \\
    &= \frac{\sin r}{m\cos^n r},
\end{align*}
from which we obtain:
\begin{align*}
    \Delta \tilde E_R &= - \frac{f_R'(r) \tan r}{\cos^2 r} \sum_{i=1}^n \frac{c_i^2}{1 - c_i \tan r} - 1 \\
    &\le - \frac{f_R'(r) \tan r}{\cos^2 r} \sum_{i=1}^n \frac{c_i^2}{A} - 1 \\
    &= -\frac{f_R'(r) \tan r \|\iiff\|^2}{A\cos^2 R} - 1 \\
    &\le \frac{\sin r \tan r \max_{\Sigma}\|\iiff\|^2}{Aq\cos^{n+2} R} - 1 \\
    &\le \frac{\tan^2 R \max_{\Sigma}\|\iiff\|^2}{Aq\cos^{n+1} R} - 1 \\
    &= -\frac{Aq\cos^{n+1} R - \tan^2 R \max_{\Sigma} \|\iiff\|^2}{Aq\cos^{n+1} R} \\
    &= -P_{\Sigma}(R).
\end{align*}
As a consequence, as long as \(P(R) \ge 0,\) which is condition \eqref{eq:bound-B-iiff}:
\begin{equation}
    \label{eq:subharmonic-2}
    \Delta E_R - \frac1{P(R)} \Delta \tilde E_R \ge 0.
\end{equation}
This time it's the function \(E_R - \frac1{P(R)} \tilde E_R\) that
is sub-harmonic and vanishes on the boundary of the tube, so
\[E_R \le \frac{1}{P(R)} \tilde E_R = \frac{Aq\cos^{n+1} R -
\tan^2 R \max_{\Sigma} \|\iiff\|^2}{Aq\cos^{n+1} R} (f_R \circ r), \]
which is what we wanted to show.

\subsection*{Second part: bounds for totally geodesic embeddings}
We're going to bound the function \(f_R\) from
\autoref{thm:geodesic-embeddings}. As a preliminary step, we obtain
exact formulas for certain definite trigonometric integrals, using the
change of variables \(\sin^2 r = u\):
\begin{equation}
\label{eq:int-trig-beta}
    \begin{aligned}
    \int_0^{\pi/2} \sin^{a} r \cos^b r\, dr
    &= \frac12 \int_0^1 u^{(a-1)/2} (1-u)^{(b-1)/2}\, du \\
    &= \frac12\betafun((a+1)/2, (b+1)/2) \\
    &= \frac{\Gamma((a+1)/2)\Gamma((b+1)/2)}{2\Gamma((a+b+2)/2)},
    \end{aligned}
\end{equation}
where \(\betafun\) and \(\Gamma\) are the beta and gamma functions, respectively.
For integers \(a\) and \(b,\) the formula above may also be obtained
inductively.
We also invoke an approximation for the Gamma function. It's a double inequality
by Wendel \cite{qi-luo13}: whenever \(x > 0,\) for each
\(0 \le s \le 1,\) we have:
\begin{equation*}
    x^{1-s} \le \frac{\Gamma(x+1)}{\Gamma(x+s)} \le (x+s)^{1-s}.
\end{equation*}
When applied to \eqref{eq:int-trig-beta}, for \(a = 0,\ b \ge 1/2,\) we obtain:
\begin{equation}
\label{eq:gautschi}
\sqrt{\frac{\pi}{2(b+1)}} \le \int_0^{\pi/2} \cos^b s\, ds \le
\sqrt{\frac\pi{2b}}.
\end{equation}

We are ready to obtain the bounds for \(f_R(r) = f_R^{(n,m)}(r),\)
where \(0 \le r \le \pi/2\) and \(n \ge 1,\ m \ge 1.\) We
go for the lower bound first. We're interested in the
asymptotic behaviour as \(n \to \infty,\) and so for the moment
we will consider the rest of the parameters to be fixed. That is,
we shall not make all dependencies explicit at all times, except
for the dependency on \(n.\) To begin with, we restate
the bound \eqref{eq:cota-bessa-vicentes} from
\cite{bessa2025concentrationmeanexittimes}
using our notation:
\begin{equation}
\label{eq:cota-bessa-vicentes-con-f}
   f_R^{(n,1)} \ge \frac{R - r}{\sin R} \frac{1 - \cos^{n+1} r}{(n+1)\cos^n r}.
\end{equation}
Note now that the function
    \[
    u \mapsto \frac{\int_0^u \sin^{m-1} s \cos^n s\, ds}{\int_0^u \cos^n s\, ds}
    \]
defined on the interval \(r \le u \le R\)
is increasing, and so it takes its minimum value at \(u = r.\) Hence:
\begin{align*}
    f_R^{(n, m)}(r) &= \int_r^R \frac{\int_0^u \sin^{m-1} s \cos^n s\, ds}{\sin^{m-1} u \cos^n u}\, du \\
    &= \int_r^R \frac{\int_0^u \sin^{m-1} s \cos^n s\, ds}{\int_0^u \cos^n s\, ds} \frac{\int_0^u \cos^n s\, ds}{\sin^{m-1} u\cos^n u}\, du \\
    &\ge \frac1{\sin^{m-1} R}\frac{\int_0^r \sin^{m-1} s\cos^n s\, ds}{\int_0^r \cos^n s\, ds} f_R^{(n,1)}(r).
\end{align*}
We may regard the integral \(\int_0^u \sin^{m-1} s \cos^n s\, ds\)
as the integral of \(\sin^{m-1} s\) with respect to the measure
\(d\mu_n(s) = \cos^n s\, ds.\)
Let
    \[\varepsilon_n = \sin^{m-1} \left( \frac\delta{\sqrt{n+1}} \right),\]
    for \(\delta \ge 0.\) By Markov's inequality
    \cite[p.~91]{stein-shakarchi05}:
    \begin{align*}
    f_R^{(n, m)}(r) &\ge \frac1{\sin^{m-1} R}
    \frac{\varepsilon_n \int_{\delta/\sqrt{n+1}}^r
    \cos^n s\, ds}{\int_0^r \cos^n s\, ds} f_R^{(n,1)}(r) \\
    &= \frac1{\sin^{m-1} R} \varepsilon_n \left( 1 - \frac{\int_0^{\delta/\sqrt{n+1}} \cos^n s\, ds}{\int_0^r \cos^n s\, ds} \right) f_R^{(n,1)}(r) \\
    &\ge \frac1{\sin^{m-1} R} \varepsilon_n \left( 1 - \frac{\delta/\sqrt{n+1}}{\int_0^r \cos^n s\, ds} \right) f_R^{(n,1)}(r).
    \end{align*}
Since the function \(r \mapsto (\int_0^r \cos^n s\, ds)/r\) defined
on \(0 \le r \le \pi/2\) is decreasing, it attains its minimum value
at \(r = \pi/2,\) from which
\(\int_0^r \cos^n s\, ds \ge (2r/\pi) \int_0^{\pi/2} \cos^n s\, ds. \)
Recalling Wendel's inequality \eqref{eq:gautschi}:
    \begin{align*}
    f_R^{(n, m)}(r)&\ge \frac1{\sin^{m-1} R} \varepsilon_n \left( 1 - \frac{\delta/\sqrt{n+1}}{(2r/\pi)\int_0^{\pi/2} \cos^n s\, ds} \right) f_R^{(n,1)}(r) \\
    &\ge \frac{1}{\sin^{m-1} R}\varepsilon_n \left( 1 - \frac{\delta/\sqrt{n+1}}
    {(2r/\pi)\sqrt{\pi/(2n+2)}} \right) f_R^{(n, 1)}(r) \\
    &= \frac{1}{\sin^{m-1} R} \varepsilon_n \left( 1 - \frac\delta r
    \sqrt{\frac{\pi }{2}} \right) f_R^{(n, 1)}(r).
    \end{align*}
Recalling \eqref{eq:cota-bessa-vicentes}:
    \begin{align*}
        f_R^{(n, m)}(r) \ge \frac{R-r}{\sin^m R} \varepsilon_n\left( 1 - \frac\delta r \sqrt{\frac{\pi}{2}} \right) \frac{1-\cos^{n+1} r}{(n+1)\cos^{n} r}.
    \end{align*}
Using the value \(\delta = r \sqrt{2/\pi}\, (m-1)/m,\)
which is close to optimal for large \(n,\)
we arrive at the desired lower bound:
    \begin{align*}
        f_R^{(n, m)}(r) &\ge \frac{R-r}{m\sin^m R}
        \sin^{m-1}\left( \sqrt{\frac2{\pi (n+1)}} \frac{m-1}m r \right) \frac{1-\cos^{n+1} r}{(n+1)\cos^{n} r}.
    \end{align*}
    In particular, \(f_R^{(n, m)}(r) \to \infty\) as \(n \to \infty,\)
    if \(m, R, r\) are fixed.

Let's finish by obtaining an upper bound for \(f^{(n, m)}_R(r),\) 
where again \(0 \le r \le R \le \pi/2\) and \(n \ge 1,\ m \ge 1\):
    \begin{align*}
        f^{(n,m)}_R(r) &= \int_r^R \frac{\int_0^u \sin^{m-1} s \cos^n s\, ds}{\sin^{m-1} u \cos^n u}\, du \\
        &\le \frac1{\cos^{n-1} R} \int_r^R\frac{\int_0^u \sin^{m-1} s\cos s\, ds}{\sin^{m-1} u \cos u}\, du \\
        &\le \frac{1}{m\cos^{n-1} R} \int_r^R \tan u\, du \\
        &= \frac{1}{m\cos^{n-1} R} \log \left( \frac{\cos r}{\cos R} \right).
    \end{align*}
In particular, \(f_R^{(n, m)}(r) \to 0\) as \(m \to 0,\) if the other
parameters remain fixed.

\section{Proof of \autoref{thm:volumes} (volume)}
\label{sec:proof-volumes}
Let \(\mathbb S^{n+m}\) be the ambient sphere, where we consider the embedding
of a closed, minimal submanifold \(\Sigma^n,\) as well as the embedding of
the totally geodesic submanifold of the same dimension \(\mathbb S^n.\)
Let \(V(R) = \vol(\tube_R(\Sigma))\) (resp. \(\tilde V(R) = \vol(\tube_R(\mathbb S^n))\))
be the volume of the tube of radius \(R\) around \(\Sigma\) (resp. \(\mathbb S^n\)).

Let \(\omega\) be the Riemannian volume form of \(\mathbb S^n,\) and let
\(\omega_{\tangent\Sigma^{\perp}}\) be the Riemannian volume form of \(\tangent\Sigma^{\perp},\)
with the signs chosen in such a way that \(\exp_{\tangent\Sigma^{\perp}}\) preserves
the orientation.
Fix \((p, v) \in \tangent\Sigma^{\perp}\) with \(\|v\| = 1.\)
The \emph{infinitesimal change of volume function} \(\vartheta\)
\cite[ch.~3]{gray90:_tubes}
is characterised by the equality:
\[ \vartheta_v(r)\omega_{\tangent\Sigma^{\perp}} = \exp_{\tangent\Sigma^\perp}^*(\omega) \]
at each point \((p, rv) \in \tangent\Sigma^\perp,\) where \(r < C(\Sigma)\) (minimal
focal distance). That is, \(\vartheta_v(r)\) is the factor by which the volume
of a small cube around \((p, rv)\) gets multiplied when applying
\(\exp_{\tangent\Sigma^\perp}.\) Since \(\vartheta_v(0) = 1\) and we are restricting
our attention to a region
where \(\exp_{\tangent\Sigma^\perp}\) is regular, it must always be the case
that \(\vartheta_v(r) > 0.\) The volume of the tube can now be written as:
\[ V(R) = \int_0^R \int_\Sigma \int_{S^{m-1}} r^{m-1} \vartheta_v(r)\, du\, d\Sigma\, dr. \]

Let \(E_R\) denote the mean exit time function of \(\tube_R(\Sigma)\)
and \(\tilde E_R = f_R \circ r\)
the radial transplant previously defined.
From \eqref{eq:subharmonic-1}, we know that \(\Delta E_r \le \Delta \tilde E_r.\)
Using the divergence theorem:
\begin{align*}
    V(r) &= \int_{\tube_r(\Sigma)} 1\, dV \\
    &= \int_{\tube_r(\Sigma)} - \Delta E_r\, dV \\
    &\ge \int_{\tube_r(\Sigma)} - \Delta \tilde E_r\, dV \\
    &= \int_{\partial \tube_r(\Sigma)} - \langle \nabla \tilde E_r, \nabla r \rangle\, dA \\
    &= \int_{\partial \tube_r(\Sigma)} - f_r'(r) \langle \nabla r, \nabla r \rangle\, dA \\
    &= -f_r'(r)\int_{\partial \tube_r(\Sigma)} dA \\
    &= -f_r'(r) \mathrm{Area}(\partial \tube_r(\Sigma)) \\
    &= -f_r'(r) V'(r).
\end{align*}
(See \cite[p.~42]{gray90:_tubes} for the last equality.) The inequality becomes
an equality in the totally geodesic case. Thus:
\begin{equation}
\label{eq:volume-diff-inequality}
     \frac{V'(r)}{V(r)} \le -\frac{1}{f_r'(r)},
\end{equation}
and
\[ \frac{\tilde V'(r)}{\tilde V(r)} = -\frac{1}{f_r'(r)} =
\frac{\cos^{n} r\sin^{m-1} r}{\int_0^r \cos^n s \sin^{m-1} s\, ds}. \]
As an aside, this latter equality can be integrated to obtain the volume of
a tube around a totally geodesic submanifold of the sphere:
\begin{align*}
\tilde V(r) &= \frac{2\vol(\mathbb S^{n+m})}{\betafun((n+1)/2, m/2)}
\int_0^r \sin^{m-1} s \cos^n s\, ds \\
&= \frac{4\pi^{(n+m+1)/2}}{\Gamma((n+1)/2) \Gamma(m/2)}
\int_0^r \sin^{m-1} s \cos^n s\, ds.
\end{align*}
From \eqref{eq:volume-diff-inequality} we also get that
\[ \frac{d}{d r} \log V(r) = \frac{V'(r)}{V(r)}
\le -\frac{1}{f_r'(r)} = \frac{\tilde V'(r)}{\tilde V(r)}
= \frac{d}{d r} \log \tilde V(r), \]
and so \((d/d r) \log(V(r) / \tilde V(r)) \le 0;\) that is,
the function \(r \mapsto V(r) / \tilde V(r)\) is decreasing. In other
words, whenever \(0 < r < R,\) we have:
\[ \frac{V(R)}{\tilde V(R)} \le \frac{V(r)}{\tilde V(r)}, \]
which is equivalent to the first desired inequality.

Now we take limits as \(r\) approaches \(0\):
\begin{align*}
    \frac{V(R)}{\tilde V(R)} &\le \lim_{r\to0} \frac{V(r)}{\tilde V(r)} \\
    &= \lim_{r \to 0} \frac{\int_0^r \int_{\Sigma}\int_{S^{m-1}}
    s^{m-1} \vartheta_u(s)\, du\, d\Sigma\, ds}
    {\int_0^r \int_{S^n}\int_{S^{m-1}}
    s^{m-1} \tilde \vartheta_u(s)\, du\, d\mathbb S^n\, ds} \\
    &= \lim_{r\to0} \frac{\int_{\Sigma}
    \int_{S^{m-1}} r^{m-1} \vartheta_u(r)\, du\, d\Sigma}{
    \int_{S^n} \int_{S^{m-1}} r^{m-1} \tilde \vartheta_u(r)
    \, du\, d\mathbb S^n} \\
    &= \frac{\int_{\Sigma} \int_{S^{m-1}} \vartheta_u(0)\, du\, d\Sigma}{
    \int_{S^n} \int_{S^{m-1}} \tilde \vartheta_u(0)\, du\, dS^n
    } \\
    &= \frac{\int_{\Sigma} \int_{S^{m-1}} 1\, du\, d\Sigma}{
    \int_{S^n} \int_{S^{m-1}} 1\, du\, dS^n}\\
    &= \frac{\vol(\Sigma)\vol(\mathbb S^{m-1})}{\vol(\mathbb S^n)
    \vol(\mathbb S^{m-1})}.
\end{align*}
L'Hôpital's rule is permitted in the third line because both
the numerator and the denominator both approach \(0\) and the
limit of the quotient of the derivatives exists.
Rearranging, we obtain the second stated inequality.

The approach used for the other bound is completely analogous, only using the
Laplace operator inequality \eqref{eq:subharmonic-2} instead of \eqref{eq:subharmonic-1}:
\begin{align*}
    V(r) &= \int_{\tube_r(\Sigma)} 1\, dV \\
    &= \int_{\tube_r(\Sigma)} -\Delta E_r\, dV \\
    &\le \int_{\tube_r(\Sigma)} -\frac1{P(r)} \Delta \tilde E_r\, dV \\
    &= \frac1{P(r)} \int_{\partial \tube_r(\Sigma)} -\langle \nabla \tilde E_r, \nabla r \rangle\, dA \\
    &= \frac1{P(r)} \int_{\partial \tube_r(\Sigma)} -f_r'(r) \langle \nabla r, \nabla r \rangle\, dA \\
    &= -\frac{f_r'(r)}{P(r)}\int_{\partial \tube_r(\Sigma)} dA \\
    &= -\frac{f_r'(r)}{P(r)}\mathrm{Area}(\partial \tube_r(\Sigma)) \\
    &= -\frac{f_r'(r)}{P(r)}V'(r).
\end{align*}
Consequently, for \(R >r\):
\[
\frac{V'(r)}{V(r)} \ge -\frac{P(r)}{f_r'(r)} =
P(r)\frac{\tilde V'(r)}{\tilde V(r)} \ge P(R)\frac{\tilde V'(r)}{\tilde V(r)}.
\]
Thus,
\[ \frac{d}{d r} \log \frac{V(r)}{\tilde V(r)^{P(R)}} \ge 0; \]
in other words, the function \(r \mapsto \frac{V(r)}{\tilde V(r)^{P(R)}}\)
is increasing, and so we arrive at the final result:
\[ \frac{V(R)}{V(r)} \ge \left( \frac{\tilde V(R)}{\tilde V(r)} \right)^{P(R)}. \]

\textbf{Artificial Intelligence Acknowledgement.} This paper did not use
generative artificial intelligence to generate the \LaTeX{} code, the
content or the results.

\providecommand{\bysame}{\leavevmode\hbox to3em{\hrulefill}\thinspace}
\providecommand{\MR}{\relax\ifhmode\unskip\space\fi MR }
\providecommand{\MRhref}[2]{%
  \href{http://www.ams.org/mathscinet-getitem?mr=#1}{#2}
}
\providecommand{\href}[2]{#2}

\end{document}